\newtheorem{thm}{Theorem}
\newtheorem{defn}{Definition}
\newtheorem{lemma}{Lemma}
\newtheorem{pro}{Proposition}
\newtheorem{rk}{Remark}
\numberwithin{equation}{section} \setcounter{tocdepth}{1}
\begin{document}

\vspace{0.5in}
\renewcommand{\bf}{\bfseries}
\renewcommand{\sc}{\scshape}
\vspace{0.5in}

\title[Stability and Bifurcation Analysis of a Phytoplankton-Zooplankton Model]{Stability and Bifurcation Analysis of a Phytoplankton-Zooplankton Model with Linear Functional Responses}

\author{S.K. Shoyimardonov}

\address{S.K. Shoyimardonov$^{a,b}$ \begin{itemize}
\item[$^a$] V.I.Romanovskiy Institute of Mathematics, 9, University str.,
Tashkent, 100174, Uzbekistan;
\item[$^b$] National University of Uzbekistan,  4, University str., 100174, Tashkent, Uzbekistan.
\end{itemize}}
\email{shoyimardonov@inbox.ru}





\keywords{Lyapunov function, LaSalle Invariance Principle, phytoplankton, zooplankton, bifurcation, Neimark-Sacker}

\subjclass[2010]{34D20 (92D25)}

\begin{abstract}
In this paper, the dynamics of a phytoplankton-zooplankton system with linear functional responses are examined. For the continuous-time model, the global asymptotic stability of the fixed points is demonstrated by constructing Lyapunov functions. For the discrete version of the model, both local and global dynamics are investigated using LaSalle's Invariance Principle. Furthermore, the occurrence of a Neimark-Sacker bifurcation at the positive fixed point is established, and it is proved that the resulting invariant closed curve is attracting.
\end{abstract}

\maketitle

\section{Introduction}

The interaction between phytoplankton and zooplankton plays a fundamental role in aquatic ecosystems, shaping nutrient cycling, energy transfer, and the overall health of marine and freshwater environments. Mathematical modeling of these interactions offers powerful tools for understanding population dynamics and assessing the stability of such ecosystems.

 Historically, some of the earliest mathematical representations of predator-prey interactions were developed by Lotka and Volterra in the 1920s. Their continuous-time models laid the foundation for modern ecological modeling. Over time, these frameworks have been extended to incorporate more realistic ecological features, including functional responses (describing how predator consumption depends on prey availability), intraspecific competition, and factors such as environmental variability and time delays. These models have found broad applications in fields ranging from ecology and epidemiology to physics and wildlife conservation. They not only elucidate predator-prey dynamics but also reveal how changes in environmental conditions, resource abundance, or anthropogenic factors can affect ecosystem stability.

While continuous-time models assume smooth evolution over time, discrete-time models are particularly valuable when biological interactions or data collection occur at distinct intervals. Discrete formulations naturally capture periodic processes such as seasonal breeding, harvesting, and sampling, making them highly applicable in ecological settings  \cite{All, May}. Moreover, discrete-time systems often display a richer spectrum of dynamical behaviors, including period-doubling bifurcations, Neimark-Sacker bifurcations, and chaos--which may not arise in continuous-time analogues \cite{De, El}. These phenomena provide important insight into complex dynamics such as oscillations and regime shifts.

In this work, we analyze both the discrete and continuous-time versions of a phytoplankton-zooplankton system, modeled by the following set of differential equations:

\begin{equation}\label{chat}
\left\{\begin{aligned}
&\frac{dP}{dt}=bP(1-\frac{P}{k})-\alpha f(P)Z,\\
&\frac{dZ}{dt}=\beta f(P)Z-rZ-\theta g(P)Z,
\end{aligned}\right.
\end{equation}
where $P$ represents the density of the phytoplankton population, and $Z$ denotes the density of the zooplankton population. The parameter $b$ is the intrinsic growth rate of phytoplankton, and $k$ represents the carrying capacity of the environment. The term
$f(P)$ characterizes the functional response of zooplankton to phytoplankton availability, while $\alpha$ and $\beta$
 are the phytoplankton consumption rate and the conversion efficiency of zooplankton, respectively.  The zooplankton population dynamics are influenced by energy gains from consuming phytoplankton ($\beta f(P)$), natural mortality ($rZ$). An additional feature of this model is the inclusion of $g(P)$, which characterizes the distribution of toxin substances produced by phytoplankton. The parameter $\theta$ represents the rate at which phytoplankton release toxins, which can negatively impact zooplankton survival. This toxin-mediated interaction introduces complexity to the predator-prey dynamics and has significant implications for the stability and resilience of the ecosystem.

Phytoplankton-zooplankton interactions have been extensively studied by researchers using various modeling approaches (see \cite{Chatt, Chen, Hen, Hong, Mac, RSH, RSHV, Sajan, SH, Tian} and references therein). The model considered in this study was initially introduced by the authors in \cite{Chatt}. In \cite{SH}, the discrete-time version of the model was analyzed with a Holling type II predator functional response and a linear prey functional response. The study demonstrated the occurrence of a Neimark-Sacker bifurcation at a positive fixed point. Similarly, in \cite{Chen}, the continuous-time version of the model with a Holling type II predator functional response was investigated. It was shown that the local stability of the positive equilibrium implies global stability if there is a unique positive equilibrium. However, when multiple positive equilibria exist, the local stability of the equilibrium with a small phytoplankton population density leads to bistability. These results were extended to include a Holling type III predator functional response under specific conditions.

In this paper, we investigate the local and global stability of the fixed points and examine the occurrence of bifurcations. The paper is organized as follows: Section 2 explores the global dynamics of the continuous-time model through the construction of a Lyapunov function. In Section 3, we analyze the discrete-time version of the model, applying LaSalle’s Invariance Principle to establish the stability of its fixed points. In Section 4, we demonstrate the occurrence of a Neimark-Sacker bifurcation at the unique positive fixed point and show that the resulting invariant closed curve is attracting.

Consider the model (\ref{chat}) by choosing $f(P)=g(P)=P$  and simplifying as
$$\overline{t}=bt, \, \overline{x}=\frac{P}{k}, \, \overline{y}=\frac{\alpha Z}{b}, \, \overline{\beta}=\frac{\beta k}{b}, \, \overline{r}=\frac{r}{b}, \, \overline{\theta}=\frac{\theta k}{b}.$$

Then by dropping the overline sign at time $t\geq0$ we get
\begin{equation}\label{chenn}
\left\{\begin{aligned}
&\dot{x}=x(1-x)-xy\\
&\dot{y}=(\beta-\theta) xy-ry\\
\end{aligned}\right.
\end{equation}

Let's denote $\gamma=\beta-\theta$ and we have the following continuous-time model:
\begin{equation}\label{contin}
\left\{\begin{aligned}
&\dot{x}=x(1-x)-xy\\
&\dot{y}=\gamma xy-ry\\
\end{aligned}\right.
\end{equation}
and corresponding discrete-time model is:

\begin{equation}\label{h12}
V:
\begin{cases}
x^{(1)}=x(2-x)-xy\\[2mm]
y^{(1)}=\gamma xy+(1-r)y.
\end{cases}
\end{equation}
In the sequel of the paper we assume that parameters $ r, \gamma-$ are-positive numbers.
It is obvious that, system (\ref{h12}) always has two nonnegative equilibria $E_0=(0; 0)$ and $E_1=(1; 0)$. From $x^{(1)}=x,$ $y^{(1)}=y$ we get the following positive fixed point $E_2=(\frac{r}{\gamma}; \frac{\gamma-r}{\gamma}),$ where $\gamma>r.$
We note that the equilibrium points of the continuous model (\ref{contin}) coincide with the fixed points of the discrete model
(\ref{h12}).

\begin{rk}
In classical Lotka–Volterra systems, the prey population is typically assumed to grow exponentially. In contrast, system~(\ref{contin}) incorporates logistic growth for the prey, introducing a carrying capacity and adding a layer of nonlinear complexity to the dynamics. Earlier studies largely focused on models with exponential prey growth or on systems with more general nonlinear predator responses, such as those described by Holling-type functions. As a result, systems combining logistic prey growth with linear predation--though ecologically reasonable--were not central to classical ecological modeling and received comparatively less attention.
\end{rk}

\section{Stability Analysis of Continuous Model (\ref{contin})}

\begin{thm}\label{thm1} If $0<\gamma\leq r,$ then the equilibrium point $E_1=(1,0)$ of the continuous model (\ref{contin}) is globally asymptotically stable.
\end{thm}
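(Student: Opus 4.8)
The plan is to exhibit a single Lyapunov function of Volterra type adapted to the boundary equilibrium $E_1=(1,0)$ and then fall back on LaSalle's Invariance Principle to close the degenerate case $\gamma=r$. Since the hypothesis $\gamma\le r$ rules out the interior fixed point $E_2$, the only candidates for the limiting behaviour are $E_0$ and $E_1$; moreover the $y$-axis $\{x=0\}$ (where $\dot x=0$ and $y\to0$) flows into $E_0$, so I work on the positively invariant region $D=\{(x,y):x>0,\ y\ge0\}$. Its invariance follows because both coordinate axes are invariant — each of $\dot x=x(1-x-y)$ and $\dot y=y(\gamma x-r)$ carries a factor $x$ or $y$ — together with uniqueness of solutions. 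Thus ``global'' here means global on $D$, the biologically relevant region.

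First I would introduce
$$V(x,y)=x-1-\ln x+\tfrac1\gamma\,y,$$
and record that $V$ is positive definite relative to $E_1$ on $D$: the map $x\mapsto x-1-\ln x$ is strictly convex with unique minimum $0$ at $x=1$, so $V\ge0$ with equality only at $(1,0)$. I would also note that $V$ is radially unbounded on $D$ — it tends to $+\infty$ as $x\to0^+$, as $x\to+\infty$, and as $y\to+\infty$ — so that its sublevel sets $\{V\le c\}\cap D$ are compact and bounded away from the $y$-axis, which guarantees that trajectories stay bounded and that LaSalle's principle is applicable.

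The computational core is to differentiate $V$ along (\ref{contin}). Using $\dot x=x(1-x-y)$ and $\dot y=y(\gamma x-r)$ one obtains $\dot V=(x-1)(1-x-y)+\tfrac1\gamma\,y(\gamma x-r)$, and the coefficient $1/\gamma$ is chosen precisely so that the cross term $xy$ cancels, leaving
$$\dot V=-(x-1)^2+\frac{\gamma-r}{\gamma}\,y.$$
Since $0<\gamma\le r$, the second term is $\le0$ on $D$, whence $\dot V\le0$ throughout $D$.

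Finally I would separate two cases. If $\gamma<r$, then $\dot V=0$ forces $x=1$ and $y=0$, so $V$ is a strict Lyapunov function and $E_1$ is globally asymptotically stable on $D$. If $\gamma=r$, then $\dot V=-(x-1)^2$ vanishes on the whole line $\{x=1\}$; here I would invoke LaSalle's Invariance Principle and identify the largest invariant subset of $\{x=1\}\cap D$. On that line $\dot x=-y$, so invariance forces $y\equiv0$, leaving only $E_1$, and convergence to $E_1$ follows. I expect the boundary case $\gamma=r$ — where the Lyapunov derivative is merely negative semidefinite and the invariance argument is genuinely needed — to be the main obstacle; the remaining work (positive definiteness and radial unboundedness of $V$, invariance of $D$, and compactness of sublevel sets) is routine.
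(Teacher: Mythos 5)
Your proposal is correct and uses the same Lyapunov function as the paper: your $V(x,y)=x-1-\ln x+\tfrac1\gamma y$ is exactly the paper's $L(x,y)=x-\ln x+\frac{y}{\gamma}-1$, and your derivative computation $\dot V=-(x-1)^2+\frac{\gamma-r}{\gamma}y$ matches the paper's line for line. The difference is in how the degenerate case is closed, and here your version is actually the more careful one. The paper asserts that $\dot L<0$ for all $(x,y)\in\mathbb{R}^2_{+}\setminus\{E_1\}$ whenever $0<\gamma\leq r$, but this is false at the boundary value $\gamma=r$: there $\dot L=-(1-x)^2$ vanishes on the entire line $\{x=1\}$, not only at $E_1$ (take $x=1$, $y>0$). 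Your case split — strict Lyapunov function when $\gamma<r$, LaSalle's Invariance Principle when $\gamma=r$, observing that on $\{x=1\}$ one has $\dot x=-y$ so the largest invariant subset of $\{\dot V=0\}$ is $\{E_1\}$ — supplies precisely the argument the paper's proof implicitly needs and skips. Your explicit restriction to the region $D=\{x>0,\ y\geq 0\}$ is also the right reading of ``global'' here, since $L$ is undefined at $x=0$ and the $y$-axis flows to $E_0$; the paper leaves this domain issue tacit. In short: same method, but your handling of $\gamma=r$ repairs a genuine (if minor) gap in the published proof rather than merely reproducing it.
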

\begin{proof} Note that in this case there is no positive fixed point. We determine the following function as a Lyapunov function:

$$L(x,y)=x-\ln x+\frac{y}{\gamma}-1.$$
Obviously, $L(1,0)=0.$ First, we show that $L(x,y)>0$ for any $(x,y)\in \mathbb{R}^2_{+}\setminus{E_1}.$ We analyze the function $f(x)=x-\ln x.$ Its derivative $f'(x)=1-\frac{1}{x}$ and $f'(x)>0$ when $x>1,$  $f'(x)<0$ when $0<x<1,$ $f'(x)=0$ at $x=1.$ Thus, the function $f(x)$ has a minimum value $f(1)=1$ at $x=1.$ So, $L(x,y)>0.$
Now, we prove that $\dot{L}\leq0:$
\[
\dot{L}=(x-1)(1-x-y)+\frac{y(\gamma x-r)}{\gamma}=-(1-x)^2+y\left(1-\frac{r}{\gamma}\right).
\]
Since $0<\gamma\leq r,$ we have  $\dot{L}<0 \ \ \forall (x,y)\in \mathbb{R}^2_{+}\setminus{E_1}.$
In addition, $L(\mathbf{x})\rightarrow\infty$ as $\|\mathbf{x}\|\rightarrow\infty,$ i.e., $L(x,y)>0$ is radially unbounded, so the fixed point $(1,0)$ is globally asymptotically stable.
\end{proof}

Let $\hat{x}=\frac{r}{\gamma}, \hat{y}=1-\frac{r}{\gamma}$ be the coordinates of the equilibrium point $E_2.$

\begin{thm}\label{thm2} If $\gamma>r,$ then the equilibrium point $E_2$ of the continuous model (\ref{contin}) is globally asymptotically
stable.
\end{thm}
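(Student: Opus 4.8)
The plan is to mirror the strategy of Theorem~\ref{thm1} but to build a Lyapunov function adapted to the interior equilibrium $E_2=(\hat x,\hat y)$ rather than the boundary point. The natural candidate is the weighted Volterra-type function
\[
V(x,y)=\gamma\left(x-\hat x-\hat x\ln\frac{x}{\hat x}\right)+\left(y-\hat y-\hat y\ln\frac{y}{\hat y}\right),
\]
defined on the open positive quadrant. First I would verify that $V$ is positive definite about $E_2$: each summand has the form $u-\hat u-\hat u\ln(u/\hat u)$, whose derivative $1-\hat u/u$ vanishes only at $u=\hat u$ and changes sign from negative to positive there, so each term attains its unique minimum value $0$ at $u=\hat u$ (this is the same convexity argument used for $f(x)=x-\ln x$ in Theorem~\ref{thm1}). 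Hence $V(\hat x,\hat y)=0$ and $V(x,y)>0$ otherwise; moreover $V\to\infty$ both as $\|(x,y)\|\to\infty$ and as $(x,y)$ approaches the coordinate axes, so $V$ is radially unbounded and its sublevel sets are compact in the interior.

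The heart of the computation is the derivative along trajectories. Writing $\dot x=x(1-x-y)$ and $\dot y=y(\gamma x-r)$ and discarding the constant terms, I obtain
\[
\dot V=\gamma(x-\hat x)(1-x-y)+(y-\hat y)(\gamma x-r).
\]
Using the equilibrium identities $1-\hat x-\hat y=0$ and $\gamma\hat x-r=0$ gives $1-x-y=-(x-\hat x)-(y-\hat y)$ and $\gamma x-r=\gamma(x-\hat x)$. Substituting these, the two mixed products $-\gamma(x-\hat x)(y-\hat y)$ and $+\gamma(x-\hat x)(y-\hat y)$ cancel exactly --- this cancellation is precisely what fixes the weight $\gamma$ on the first summand --- leaving
\[
\dot V=-\gamma(x-\hat x)^2\le 0 .
\]

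The main obstacle is that $\dot V$ is only negative \emph{semi}-definite: it vanishes on the entire line $\{x=\hat x\}$, not merely at $E_2$, so the direct argument of Theorem~\ref{thm1} (where $\dot L<0$ strictly off the equilibrium) cannot be repeated verbatim and negativity alone does not yield asymptotic stability. To close this gap I would appeal to LaSalle's Invariance Principle, whose hypotheses are met since the compact sublevel sets of $V$ are positively invariant and hence trajectories are bounded. It then remains to identify the largest invariant subset of $\{\dot V=0\}=\{x=\hat x\}$: on any trajectory confined to this line $x$ is constant, so $\dot x=\hat x(1-\hat x-y)=0$ forces $y=1-\hat x=\hat y$, and the only invariant set contained in $\{x=\hat x\}$ is the singleton $\{E_2\}$. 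LaSalle's theorem thus guarantees that every positive trajectory converges to $E_2$, which together with the positive definiteness and radial unboundedness of $V$ establishes the global asymptotic stability of $E_2$.
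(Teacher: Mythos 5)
Your proposal is correct, and it uses essentially the same Lyapunov function as the paper: the paper takes $L(x,y)=H(\hat x,\hat y)-H(x,y)$ with $H(x,y)=\hat x\ln x-x+\frac{\hat y\ln y-y}{\gamma}$, which is exactly $\frac{1}{\gamma}V(x,y)$ for your $V$, and the same equilibrium identities $\hat x+\hat y=1$, $\gamma\hat x=r$ produce the same cancellation, giving $\dot L=-(x-\hat x)^2$ (your $\dot V=-\gamma(x-\hat x)^2$). Where you genuinely depart from the paper is in how the proof is closed, and here your version is the more careful one. The paper asserts $\dot L=-(x-\hat x)^2<0$ for all $(x,y)\in\mathbb{R}^2_{+}\setminus E_2$ and concludes directly; but this strict inequality is false on the line $\{x=\hat x\}$, where $\dot L$ vanishes for every $y$, so the derivative is only negative \emph{semi}-definite and the classical Lyapunov theorem for asymptotic stability does not apply verbatim. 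You identify this obstacle explicitly and repair it with LaSalle's Invariance Principle: the sublevel sets of $V$ are compact in the open quadrant (since $V\to\infty$ both at infinity and at the coordinate axes, thanks to the $-\hat x\ln x$ and $-\hat y\ln y$ terms), and on any trajectory confined to $\{x=\hat x\}$ the equation $\dot x=\hat x(1-\hat x-y)=0$ forces $y=\hat y$, so the largest invariant subset of $\{\dot V=0\}$ is $\{E_2\}$. Together with positive definiteness of $V$ (which gives Lyapunov stability), this yields global asymptotic stability on the open positive quadrant. In short: same Lyapunov function and same key computation, but your invariance argument supplies a step that the paper's proof, as written, is missing.
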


\begin{proof} We define the following function
$$H(x,y) = \hat{x}\ln x-x+ \frac{\hat{y}\ln y-y}{\gamma}.$$
Using this function we construct the following Lyapunov function:
$$ L(x,y)= H(\hat{x},\hat{y})-H(x,y).$$
Obviously, $L(\hat{x},\hat{y})=0.$ Now we check that $\dot{L}\leq0.$ For this we use that
\begin{equation*}\label{cont}
\left\{\begin{aligned}
&\hat{x}+\hat{y}=1\\
&\gamma\hat{x}=r.\\
\end{aligned}\right.
\end{equation*}
Then
$$\dot{L}=(x-\hat{x})(1-x-y)+(x-\hat{x})(y-\hat{y})=-(x-\hat{x})^2<0, \ \  \forall (x,y)\in \mathbb{R}^2_{+}\setminus{E_2}.$$
Now, we demonstrate that $L(x,y)$ is positive definite.
Consider the function
$$u(a,b)=a\ln a-a\ln b+b-a.$$
We prove that $u(a,b)\geq0$ for any $a>0, b>0.$ If $a=b$ then $u(a,b)=0.$

Let $a>b.$ We then apply the Mean Value Theorem to the function $f(x)=a\ln x$ on the interval $[b,a].$ Thus,
$$\frac{f(a)-f(b)}{a-b}=f'(c),$$ where
$$b<c<a \Rightarrow  \frac{1}{a}<\frac{1}{c}<\frac{1}{b} \Rightarrow 1<\frac{a}{c}<\frac{a}{b} \Rightarrow 1<f'(c)<\frac{a}{b}.$$
So, we have that $u(a,b)>0.$ The case $a<b$ can be proven in the same way. We note that $L(x,y)=u(\hat{x},x)+\gamma^{-1}u(\hat{y},y)$ and from this we get the proof that $L(x,y)$ is positive defined. Moreover, it can be easily seen that $L(x,y)$ is radially unbounded, so the equilibrium point $E_2$ is globally asymptotically stable. \end{proof}

\section{Stability Analysis of Discrete Model (\ref{h12})}

\begin{defn}\label{def1} Let $E(x,y)$ be a fixed point of the operator $F:\mathbb{R}^{2}\rightarrow\mathbb{R}^{2}$ and $\lambda_1, \lambda_2$ are eigenvalues of the Jacobian matrix $J=J_{F}$ at the point $E(x,y).$

(i) If $|\lambda_1|<1$ and $|\lambda_2|<1$ then the fixed point $E(x,y)$ is called an attractive or sink;

(ii) If $|\lambda_1|>1$ and $|\lambda_2|>1$ then the fixed point $E(x,y)$ is called  repelling or source;

(iii) If $|\lambda_1|<1$ and $|\lambda_2|>1$ (or $|\lambda_1|>1$ and $|\lambda_2|<1$) then the fixed point $E(x,y)$ is called  saddle;

(iv) If either $|\lambda_1|=1$ or $|\lambda_2|=1$ then the fixed point $E(x,y)$ is called to be  non-hyperbolic;
\end{defn}

Before analyze the fixed points we give the following useful lemma.
\begin{lemma}[Lemma 2.1, \cite{Cheng}]\label{lem1} Let $F(\lambda)=\lambda^2+B\lambda+C,$ where $B$ and $C$ are two real constants. Suppose $\lambda_1$ and $\lambda_2$ are two roots of $F(\lambda)=0.$ Then the following statements hold.
\begin{itemize}
 \item[(i)]  If $F(1)>0$ then
{\begin{itemize}
\item[(i.1)]
$|\lambda_1|<1$ and $|\lambda_2|<1$ if and only if $F(-1)>0$ and $C<1;$
\item[(i.2)]
 $\lambda_1=-1$ and $\lambda_2\neq-1$ if and only if $F(-1)=0$ and $B\neq2;$
\item[(i.3)]
$|\lambda_1|<1$ and $|\lambda_2|>1$ if and only if $F(-1)<0;$
\item[(i.4)]
$|\lambda_1|>1$ and $|\lambda_2|>1$ if and only if $F(-1)>0$ and $C>1;$
\item[(i.5)]
$\lambda_1$ and $\lambda_2$ are a pair of conjugate complex roots and $|\lambda_1|\!=\!|\lambda_2|\!=\!1$ if and only
       if $-2<B<2$ and $C=1;$
\item[(i.6)]
 $\lambda_1=\lambda_2=-1$ if and only if $F(-1)=0$ and $B=2.$
\end{itemize}}
\item[(ii)]  If $F(1)=0,$ namely, 1 is one root of $F(\lambda)=0,$ then the other root $\lambda$ satisfies
$|\lambda|=(<,>)1$ if and only if $|C|=(<,>)1.$
\item[(iii)]  If $F(1)<0,$ then $F(\lambda)=0$ has one root lying in $(1;\infty).$ Moreover,
\begin{itemize}
\item[(iii.1)]
 the other root $\lambda$ satisfies $\lambda<(=)-1$ if and only if $F(-1)<(=)0;$
\item[(iii.2)]
 the other root $\lambda$ satisfies $-1<\lambda<1$ if and only if $F(-1)>0.$
 \end{itemize}
\end{itemize}
\end{lemma}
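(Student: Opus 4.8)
The plan is to argue entirely from root--location geometry, starting from the factorization $F(\lambda)=(\lambda-\lambda_1)(\lambda-\lambda_2)$, which supplies the three evaluations I will use throughout,
$$F(1)=(1-\lambda_1)(1-\lambda_2),\qquad F(-1)=(1+\lambda_1)(1+\lambda_2),\qquad C=\lambda_1\lambda_2,$$
together with $B=-(\lambda_1+\lambda_2)$ and the discriminant $\Delta=B^2-4C$. The first reduction is to split according to whether $\Delta<0$ (a complex–conjugate pair) or $\Delta\geq0$ (two real roots), since $F(\pm1)$ behaves very differently in the two regimes.

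\emph{Complex case.} If $\lambda_{1,2}=p\pm qi$ with $q\neq0$, then $F(1)=(1-p)^2+q^2>0$ and $F(-1)=(1+p)^2+q^2>0$ hold automatically, while $C=p^2+q^2=|\lambda_1|^2=|\lambda_2|^2>0$. Hence complex roots can only occur under the hypothesis $F(1)>0$ of branch (i), and $|\lambda_1|=|\lambda_2|=1\iff C=1$; imposing $\Delta<0$ with $C=1$ is exactly $B^2<4$, i.e. $-2<B<2$, which is precisely (i.5). The same identity $C=|\lambda|^2$ settles the complex contributions to (i.1) and (i.4), since then $C<1$ forces $|\lambda|<1$ and $C>1$ forces $|\lambda|>1$.

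\emph{Real case.} Here I use that $F$ is an upward parabola, so for $t\in\mathbb{R}$ the sign of $F(t)$ locates $t$ relative to the root interval (with $\lambda_1\leq\lambda_2$): $F(t)<0\iff\lambda_1<t<\lambda_2$, $F(t)=0\iff t\in\{\lambda_1,\lambda_2\}$, and $F(t)>0\iff t$ lies outside $[\lambda_1,\lambda_2]$. Reading this at $t=1$ gives case (iii) ($F(1)<0$ means one root lies in $(1,\infty)$) and, inside it, reading at $t=-1$ yields (iii.1)--(iii.2), the sign of $F(-1)$ recording whether the smaller root is below, equal to, or above $-1$. Case (ii) is immediate: $F(1)=0$ means $\lambda_1=1$, so $C=\lambda_1\lambda_2=\lambda_2$ and $|C|=|\lambda_2|$. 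For the real part of (i), where $F(1)>0$ puts both roots on the same side of $1$ and $F(-1)>0$ puts them on the same side of $-1$, the only admissible configurations are both roots in $(-1,1)$, both above $1$, or both below $-1$; the product $C$ separates them, since the two ``outside'' configurations give $C>1$ and the ``inside'' one gives $C<1$. Branches (i.2) and (i.6) follow from $F(-1)=0$ (so $\lambda_1=-1$) together with $B=1-\lambda_2$, whence $\lambda_2=-1\iff B=2$.

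The step I expect to require the most care is the boundary $C=1$ within branch (i), where the complex and real pictures meet. I will first note that $F(1)>0$ and $F(-1)>0$ add to $2(1+C)>0$, so $C>-1$ is automatic and ``$C<1$'' may be used interchangeably with ``$|C|<1$''. Then I must rule out real roots with $C=1$ under $F(\pm1)>0$: such roots are a reciprocal pair $\lambda,1/\lambda$, and a short sign check shows that a genuine pair with $\lambda\neq\pm1$ always makes one of $F(1),F(-1)$ negative, while $\lambda_1=\lambda_2=1$ and $\lambda_1=\lambda_2=-1$ are excluded by $F(1)>0$ and $F(-1)>0$ respectively. This forces $\Delta<0$, returns the argument to the complex case, and confirms that $C=1$ is exactly configuration (i.5), closing the logical loop.
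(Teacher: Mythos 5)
The paper offers no proof of this lemma at all: it is quoted verbatim from \cite{Cheng} (Lemma~2.1 there) and used as a black box, so there is no in-paper argument to compare yours against, and your proposal must stand as a self-contained proof. On that footing it is essentially the canonical argument, and almost everything in it checks out. The factorization identities $F(1)=(1-\lambda_1)(1-\lambda_2)$, $F(-1)=(1+\lambda_1)(1+\lambda_2)$, $C=\lambda_1\lambda_2$, the split on the discriminant, and the upward-parabola sign-location principle are the right tools. The complex case does force $F(\pm1)>0$ and $C=|\lambda_{1,2}|^2$, which correctly disposes of (i.5) and the complex halves of (i.1) and (i.4); under $F(1)>0$ and $F(-1)>0$ the three real configurations you list (both roots in $(-1,1)$, both above $1$, both below $-1$) are indeed the only ones, and $C$ separates them; (i.2) and (i.6) via $B=1-\lambda_2$ once $\lambda_1=-1$, (ii) via $C=\lambda_2$, and (iii), (iii.1)--(iii.2) by reading the parabola at $t=\pm1$ are all sound. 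The boundary discussion is also correct: $F(1)+F(-1)=2(1+C)$ gives $C>-1$, and a real reciprocal pair with $\lambda\neq\pm1$ straddles either $1$ or $-1$, making one of $F(\pm1)$ negative.

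The one genuine omission is (i.3). Your treatment of branch (i) analyzes only the subcase $F(-1)>0$ (plus the boundary $F(-1)=0$ in (i.2)/(i.6)), and the item whose defining condition is $F(-1)<0$ is never argued, so one of the stated equivalences has no proof. It does follow in a few lines from machinery you already installed, and you should add them: if $F(-1)<0$ the roots must be real (a conjugate pair gives $F(-1)>0$) with $\lambda_1<-1<\lambda_2$; since $F(1)>0$ places $1$ outside $[\lambda_1,\lambda_2]$ and $\lambda_1<1$, necessarily $\lambda_2<1$, so $\lambda_1<-1<\lambda_2<1$ and the moduli straddle $1$. Conversely, if $|\lambda_1|<1<|\lambda_2|$ the roots are real (conjugate pairs have equal moduli); $\lambda_2>1$ is impossible because then $\lambda_1<1<\lambda_2$ would force $F(1)<0$, so $\lambda_2<-1<\lambda_1<1$ and $F(-1)=(1+\lambda_1)(1+\lambda_2)<0$. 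With this paragraph inserted, your proof is complete and correct.
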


\begin{pro}\label{prop1} For the fixed points $E_0=(0,0),$ $E_1=(1,0)$ and $E_2=(\frac{r}{\gamma} ,\frac{\gamma-r}{\gamma}),$ of (\ref{h12}),  the following statements hold true:

(i)
\[
E_{0}=\left\{\begin{array}{lll}
{\rm nonhyperbolic}, \ \ {\rm if} \ \  r=2\\[2mm]
{\rm saddle}, \ \ \ \ {\rm if} \ \  0<r<2\\[2mm]
{\rm repelling}, \ \ \ \  {\rm if}  \ \ r>2,
\end{array}\right.
\]

(ii)
\[
E_{1}=\left\{\begin{array}{lll}
{\rm nonhyperbolic}, \ \ {\rm if} \ \   \gamma=r \ \ {\rm or} \ \  \gamma=r-2\\[2mm]
{\rm attractive}, \ \ \ \ {\rm if} \ \  r-2<\gamma<r\\[2mm]
{\rm saddle}, \ \ \ \  {\rm if}  \ \  otherwise,
\end{array}\right.
\]

(iii)
\[
E_2=\left\{\begin{array}{lll}
{\rm nonhyperbolic}, \ \ {\rm if} \ \  \gamma=r+1 \\[2mm]
{\rm attractive}, \ \ {\rm if} \ \  r<\gamma<1+r\\[2mm]
{\rm repelling}, \ \  {\rm if} \ \ \gamma>r+1.
\end{array}\right.
\]
\end{pro}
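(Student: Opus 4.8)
The plan is to compute the Jacobian of $V$ once and then evaluate it at each fixed point. Differentiating \eqref{h12} gives
$$J(x,y)=\begin{pmatrix} 2-2x-y & -x \\ \gamma y & \gamma x+1-r \end{pmatrix}.$$
At $E_0=(0,0)$ this is diagonal with entries $2$ and $1-r$; since $\lambda_1=2>1$ always, the type of $E_0$ is governed entirely by $|1-r|$, yielding nonhyperbolic at $r=2$, saddle for $0<r<2$, and repelling for $r>2$. At $E_1=(1,0)$ the matrix is upper triangular with eigenvalues $0$ and $\gamma+1-r$; as $|\lambda_1|=0<1$, the classification depends only on $|\gamma+1-r|$, and the cases $\gamma+1-r=\pm1$, $|\gamma+1-r|<1$, $|\gamma+1-r|>1$ give exactly statement (ii). These two fixed points are routine because the Jacobian is triangular and the eigenvalues are read off directly.

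The substantive case is $E_2$, where $J$ has no zero entries and I would appeal to Lemma~\ref{lem1}. Using the equilibrium relations $\gamma\hat x=r$ and $\hat x+\hat y=1$, the Jacobian simplifies to
$$J(E_2)=\begin{pmatrix} 1-\hat x & -\hat x \\ \gamma\hat y & 1 \end{pmatrix},$$
with trace $T=2-\hat x$ and determinant $D=(1-\hat x)(1+r)$. Writing the characteristic polynomial as $F(\lambda)=\lambda^2+B\lambda+C$ with $B=-T=\hat x-2$ and $C=D$, I would first verify the reduction $F(1)=r\hat y>0$, which places us in part (i) of Lemma~\ref{lem1} for every admissible parameter.

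Next I would compute $F(-1)=4+r-\tfrac{r(2+r)}{\gamma}$ and, using $\gamma>r$, bound $\tfrac{r(2+r)}{\gamma}<2+r$ to conclude $F(-1)>2>0$ throughout the region $\gamma>r$. Since $0<\hat x<1$, one also has $B=\hat x-2\in(-2,-1)$, so $-2<B<2$. With $F(1)>0$ and $F(-1)>0$ both fixed, the classification of $E_2$ collapses to the position of $C$ relative to $1$: the identity $C-1=\tfrac{r(\gamma-r-1)}{\gamma}$ shows that $C<1$, $C=1$, $C>1$ correspond respectively to $\gamma<r+1$, $\gamma=r+1$, $\gamma>r+1$. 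Then case (i.1) gives attractive when $C<1$ (i.e. $r<\gamma<r+1$), case (i.4) gives repelling when $C>1$ (i.e. $\gamma>r+1$), and case (i.5)—available precisely because $-2<B<2$—gives a conjugate pair of eigenvalues on the unit circle, hence nonhyperbolic, when $C=1$ (i.e. $\gamma=r+1$), completing statement (iii).

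The main obstacle I anticipate is bookkeeping: correctly reducing $T$, $D$, $F(\pm1)$ and $C-1$ via the two equilibrium relations, and confirming that $F(-1)>0$ holds on the whole region $\gamma>r$ so that the only boundary crossing is $C=1$. Verifying $-2<B<2$ at $C=1$ is the key detail, since it distinguishes a genuine complex-eigenvalue (Neimark–Sacker) degeneracy from a $\lambda=\pm1$ one and justifies invoking case (i.5) rather than (i.2) or (i.6); this is exactly the structure that makes the bifurcation analysis of Section~4 applicable.
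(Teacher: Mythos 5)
Your proposal is correct and follows essentially the same route as the paper: it computes the same Jacobian, reads the eigenvalues off the diagonal/triangular matrices at $E_0$ and $E_1$, and classifies $E_2$ via Lemma~\ref{lem1} applied to the characteristic polynomial, with the sign of $C-1=\frac{r(\gamma-r-1)}{\gamma}$ deciding the three cases. Your explicit verifications --- $F(1)=r\hat{y}>0$, $F(-1)=4+r-\frac{r(2+r)}{\gamma}>2$ on all of $\gamma>r$, and $-2<B<2$ so that case (i.5) (complex conjugate unit-modulus eigenvalues) rather than (i.2) or (i.6) applies at $\gamma=r+1$ --- merely fill in the steps the paper dismisses as ``easily seen,'' and they check out.
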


\begin{proof} (i) The Jacobian of the operator (\ref{h12}) is
\begin{equation}\label{jac}
J(x,y)=\begin{bmatrix}
2-2x-y  & -x\\
\gamma y & \gamma x+1-r
\end{bmatrix}.
\end{equation}
Then $J(0,0)=\begin{bmatrix}
2 & 0\\
0 & 1-r
\end{bmatrix}$  and eigenvalues are $2$ and $1-r.$ From this, for $E_0$ we can take the proof easily.

(ii) The Jacobian at the fixed point $E_1$ is $J(1,0)=\begin{bmatrix}
0 & -1\\
0 & \gamma +1-r
\end{bmatrix}$ and the eigenvalues are $\lambda_1=0, \lambda_2=\gamma+1-r.$ By solving $|\lambda_2|<1$ we get the condition $r-2<\gamma<r.$

(iii) Now consider the Jacobian at the positive fixed point using $E_2=(\frac{r}{\gamma},\frac{\gamma-r}{\gamma})$:

\[
J(E_2)=\begin{bmatrix}
1-\frac{r}{\gamma} & -\frac{r}{\gamma}\\
\gamma-r & 1
\end{bmatrix}.
\]

The characteristic polynomial is
\[
F(\lambda)=\lambda^{2}-\left(2-\frac{r}{\gamma}\right)\lambda+\frac{(\gamma-r)(1+r)}{\gamma}.
\]

It can be easily seen that $F(1)>0, F(-1)>0.$ The free coefficient of $F(\lambda)$ is

\[
C=\frac{(\gamma-r)(1+r)}{\gamma}.
\]
By solving the inequalities $C<1, C>1$ and  according to Lemma \ref{lem1} we obtain the proof.
\end{proof}

\begin{pro} The following sets
\begin{equation}
\mathcal{X}=\{(x,y)\in \mathbb{R}_+^2: 0\leq x\leq2, y=0\}, \ \ \mathcal{Y}=\{(x,y)\in \mathbb{R}_+^2: x=0, y\geq0\},
\end{equation}
are invariant w.r.t. operator (\ref{h12}). In this operator, the condition $r\leq 1$ must be satisfied for invariance of $\mathcal{Y}.$
\end{pro}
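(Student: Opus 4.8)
The plan is to verify invariance directly: substitute the defining constraint of each set into the operator $V$ from (\ref{h12}) and check that the resulting image satisfies the same constraints. Both cases reduce to elementary one-variable estimates, so no fixed-point or topological machinery is needed.

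First I would handle $\mathcal{X}$. Setting $y=0$ in (\ref{h12}) gives $x^{(1)}=x(2-x)$ and $y^{(1)}=\gamma x\cdot 0+(1-r)\cdot 0=0$, so the image automatically lies on the line $y=0$. It then remains to show $0\leq x^{(1)}\leq 2$ whenever $0\leq x\leq 2$. Nonnegativity is immediate since both factors $x$ and $2-x$ are nonnegative on $[0,2]$; for the upper bound I would complete the square, writing $x(2-x)=1-(x-1)^2\leq 1\leq 2$. Hence $x^{(1)}\in[0,1]\subset[0,2]$, and $\mathcal{X}$ is invariant.

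Next I would treat $\mathcal{Y}$. Setting $x=0$ gives $x^{(1)}=0\cdot(2-0)-0\cdot y=0$ and $y^{(1)}=\gamma\cdot 0\cdot y+(1-r)y=(1-r)y$, so the image stays on the line $x=0$. Invariance therefore reduces to the requirement $y^{(1)}=(1-r)y\geq 0$ for all $y\geq 0$, which holds if and only if $1-r\geq 0$, i.e. $r\leq 1$. This shows simultaneously that $r\leq 1$ is sufficient and that it is necessary: if $r>1$, then any starting point with $y>0$ is sent to a point with negative second coordinate, so the image leaves $\mathbb{R}_+^2$.

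The argument presents no genuine obstacle; the only steps requiring any care are the upper estimate $x(2-x)\leq 2$ on $[0,2]$, which the completed-square form $1-(x-1)^2$ settles at once, and the sign analysis of the factor $1-r$, which is exactly what singles out the hypothesis $r\leq 1$ as the condition for invariance of $\mathcal{Y}$.
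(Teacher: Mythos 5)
Your proof is correct and follows essentially the same route as the paper: direct substitution of $y=0$ (resp.\ $x=0$) into the operator, the observation that the quadratic $x(2-x)$ has maximum value $1$ on $[0,2]$ (your completed square $1-(x-1)^2$ is the same fact), and the sign condition $1-r\geq 0$ for $\mathcal{Y}$. Your explicit remark that $r\leq 1$ is also \emph{necessary} (a point the paper states but does not argue) is a small welcome addition, but it does not change the approach.
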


\begin{proof} It is clear that $y^{(1)}=0$ when $y=0.$ If $y=0$ then $x^{(1)}=x(2-x)\geq0$ and the maximum value in [0,2] of the quadratic function $f(x)=x(2-x)$ is 1. Hence, the set $\mathcal{X}$ is an invariant. If $x=0$ then $y^{(1)}=(1-r)y\geq0$ and $\mathcal{Y}$ also invariant set. The proof is completed.
\end{proof}

\subsubsection{Dynamics on invariant sets $\mathcal{X}$ and $\mathcal{Y}$}
\hfill

\emph{Case:} $\mathcal{X}.$ In this case the restriction is
$$x^{(1)}=x(2-x)=f(x),  \ \ x\in[0,2].$$
Note that $f(x)$ has two fixed points $x=0$ and $x=1.$ Since $f'(x)=2-2x$ the fixed point 0 is repelling and 1 is an attracting fixed point. Moreover, the equation
$$\frac{f^2(x)-x}{f(x)-x}=0 \Rightarrow \frac{x^4-4x^3+6x^2-3x}{x^2-x}=0 \Rightarrow x^2-3x+3=0 $$
has no real solution. Consequently, the function $f$ does not have two periodic points and, by Sarkovskii's theorem, there is no any periodic point (except fixed point).

Let's study the dynamics of $f(x):$ If $x\in(0,1)$ then $0<x<f(x)<1.$ So
\[
0<f(x)<f^2(x)<1 \Rightarrow  \cdots \Rightarrow 0<f^n(x)<f^{n+1}(x)<1.
\]
Thus, the sequence $f^n(x)$ is monotone increasing and bounded from above, which has the limit. Since 1 is a unique fixed point in $(0,1],$ we have that
\[
\lim_{n\to\infty}f^{n}(x)=1.
\]
If $x$ lies in the interval (1,2) then $f(x)$ lies in (0,1), so that the previous argument implies $f^n(x)=f^{n-1}(f(x))\rightarrow1$  as $n\rightarrow\infty.$
Hence, 1 is globally attractive fixed point. (similar to quadratic family, \cite{De}, page 32 ).

\emph{Case:} $\mathcal{Y}.$ In this case the restriction is $$y^{(1)}=(1-r)y,  \ \ 0<r\leq1$$
and $y^{(n)}=(1-r)^ny\rightarrow0$ as $n\rightarrow\infty.$ Thus, the dynamics on $\mathcal{X}$ and $\mathcal{Y}$ is clear.

Denote
\[
S_1=\left\{(x,y)\in R^2_{+}: 0<x\leq 3-2\sqrt{2}, \ \ \frac{(1-\sqrt{x})^2}{2}\leq y\leq \frac{(1+\sqrt{x})^2}{2} \right\}
\]

\[
S_2=\left\{(x,y)\in R^2_{+}: 3-2\sqrt{2}<x<1, \ \ x< y\leq \frac{(1+\sqrt{x})^2}{2}\right\}
\]
and $S=S_1\cup S_2$ be the set represented in Fig. \ref{fig1}.

 \begin{figure}[h!]
  \centering
  \includegraphics[width=5cm]{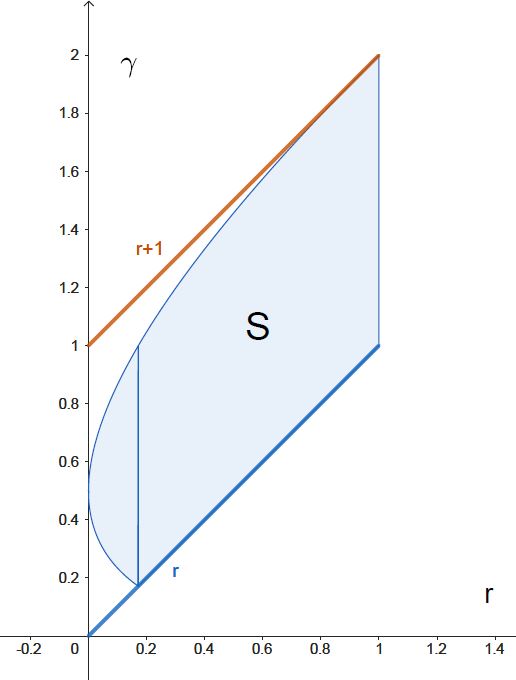}\\
  \caption{The parameter set $S$}\label{fig1}
\end{figure}

\begin{lemma}\label{lem2} Let $0<r\leq1.$  If $0<\gamma\leq r$ or $(r,\gamma)\in S$ then the following set
\[
\mathcal{M}=\{(x,y)\in \mathbb{R}_+^2: 0\leq x\leq1, \ \ 0\leq y\leq2-x\}
\]
is an invariant w.r.t operator (\ref{h12}).
\end{lemma}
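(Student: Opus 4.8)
The plan is to verify directly that $V(\mathcal{M})\subseteq\mathcal{M}$: fixing an arbitrary point $(x,y)$ with $0\le x\le1$ and $0\le y\le2-x$, I will check the three conditions defining membership of the image $(x^{(1)},y^{(1)})$ in $\mathcal{M}$, namely $0\le x^{(1)}\le1$, $y^{(1)}\ge0$, and the coupling bound $y^{(1)}\le2-x^{(1)}$. The first two are routine, while the third is where the hypotheses on $(r,\gamma)$ are consumed; it will be the main obstacle.

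For the abscissa, write $x^{(1)}=x(2-x-y)$. Nonnegativity is immediate because $x\ge0$ and $2-x-y\ge0$ (as $y\le2-x$); the upper bound follows from $x^{(1)}\le x(2-x)$ (since $y\ge0$) together with the fact that $t\mapsto t(2-t)$ is increasing on $[0,1]$ with value $1$ at $t=1$. For the ordinate, factor $y^{(1)}=y(\gamma x+1-r)$; here $y\ge0$, $\gamma x\ge0$ and $1-r\ge0$ (this is exactly where $0<r\le1$ is used), so $y^{(1)}\ge0$.

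The crux is $y^{(1)}\le2-x^{(1)}$. Substituting the two components and simplifying, this is equivalent to
\[
y\bigl((\gamma-1)x+1-r\bigr)\le(1-x)^{2}+1 ,\qquad y\in[0,2-x].
\]
Since the left-hand side is affine, hence monotone, in $y$, its maximum over $[0,2-x]$ equals $0$ when the coefficient $(\gamma-1)x+1-r$ is nonpositive — so the bound is then automatic — and is attained at $y=2-x$ otherwise. It therefore suffices to control this worst case, which, at every $x$ where the coefficient is positive, rearranges to the one-variable quadratic inequality
\[
Q(x):=\gamma x^{2}-(2\gamma+r-1)x+2r\ge0 ;
\]
I shall in fact establish $Q(x)\ge0$ for all $x\in[0,1]$, a convenient sufficient condition.

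It remains to prove $Q\ge0$ on $[0,1]$ under either hypothesis, and matching the boundary curves of $S$ to this inequality is the delicate step. Since $Q$ opens upward and $Q(0)=2r>0$, a violation can occur only if the vertex $x_{*}=\tfrac{2\gamma+r-1}{2\gamma}$ lies in $(0,1)$ with $Q(x_{*})<0$; when $x_{*}\le0$ the function is increasing on $[0,1]$ and $Q(0)>0$ closes the case. The vertex condition $Q(x_{*})\ge0$ is equivalent to $(2\gamma+r-1)^{2}\le8r\gamma$, i.e. to $4\gamma^{2}-4(r+1)\gamma+(r-1)^{2}\le0$, and solving this quadratic in $\gamma$ yields precisely the roots $\tfrac{(1-\sqrt{r})^{2}}{2}$ and $\tfrac{(1+\sqrt{r})^{2}}{2}$; thus the condition reads $\tfrac{(1-\sqrt{r})^{2}}{2}\le\gamma\le\tfrac{(1+\sqrt{r})^{2}}{2}$. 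I would then confirm that each hypothesis forces this band: if $0<\gamma\le r$, the upper estimate holds since $\gamma\le r\le\tfrac{(1+\sqrt{r})^{2}}{2}$, while the lower estimate is needed only when $x_{*}>0$, i.e. $\gamma>\tfrac{1-r}{2}$, and then $\gamma>\tfrac{1-r}{2}\ge\tfrac{(1-\sqrt{r})^{2}}{2}$ because $1-r\ge(1-\sqrt{r})^{2}$ for $r\le1$; if $(r,\gamma)\in S$, the upper estimate is built into the definition of $S$, the lower one is explicit on $S_{1}$, and on $S_{2}$ it follows from $\gamma>r>\tfrac{(1-\sqrt{r})^{2}}{2}$, the last inequality being valid exactly because $r>3-2\sqrt{2}$.
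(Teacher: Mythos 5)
Your proof is correct, and it reaches the same decisive computation as the paper — the quadratic $Q(x)=\gamma x^2-(2\gamma+r-1)x+2r$ and the discriminant condition $4\gamma^2-4(r+1)\gamma+(r-1)^2\leq0$, whose roots $\frac{(1\mp\sqrt r)^2}{2}$ are exactly the boundary curves of $S$ — but the route there is organized differently, and in two respects more cleanly. First, the paper splits into two unrelated arguments: for $0<\gamma\leq r$ it uses a monotonicity comparison ($y^{(1)}\leq y$, and $x^{(1)}\gtrless x$ according to $y\lessgtr 1-x$) and never touches the quadratic, reserving $Q$ for the case $\gamma>r$; you instead observe that $y^{(1)}\leq2-x^{(1)}$ is affine in $y$, evaluate at the worst endpoint $y=2-x$, and thereby reduce \emph{both} parameter regimes to the single inequality $Q\geq0$ on $[0,1]$ — your vertex-location split ($x_*\leq0$ versus $x_*>0$ with $\gamma>\frac{1-r}{2}\geq\frac{(1-\sqrt r)^2}{2}$) shows the paper's monotonicity case is in fact subsumed by the quadratic criterion. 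Second, the paper's $\gamma>r$ case divides by $x-\gamma x+r-1$ (which requires sign discussions and degenerates to $0$ at $r=1$, $x=0$) and argues geometrically that the bounding curve does not meet the line $y=2-x$; your endpoint evaluation avoids the division and the intersection picture altogether. One sentence of yours is imprecise: a violation of $Q\geq0$ on $[0,1]$ could also occur with vertex $x_*\geq1$ and $Q(1)<0$, not only with $x_*\in(0,1)$; but this is harmless, since whenever $x_*>0$ you establish the discriminant bound, which gives $Q\geq0$ on all of $\mathbb{R}$ regardless of where the vertex sits, and the $x_*\leq0$ case is closed by monotonicity and $Q(0)=2r>0$. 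Your verification that each piece of $S$ ($S_1$ explicitly, $S_2$ via $\gamma>r>\frac{(1-\sqrt r)^2}{2}$ exactly when $r>3-2\sqrt2$) forces the discriminant band is also correct and makes explicit a step the paper compresses into ``solving this inequality we get $(r,\gamma)\in S$.''
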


\begin{proof} Let $(x,y)\in \mathcal{M}.$ Since $0<r\leq1, \gamma>0$ and  $0\leq y\leq2-x$ we have that $x^{(1)}=x(2-x-y)\geq0$ and   $y^{(1)}=y(\gamma x+1-r)\geq0.$ Moreover, the quadratic function $x(2-x)$ achieves its maximum value of 1, so $x^{(1)}=x(2-x-y)\leq x(2-x)\leq1,$ thus, $0\leq x^{(1)}\leq1.$ Now, we have to prove that $y^{(1)}\leq2-x^{(1)}.$

\emph{Case-1}.  Let $0<\gamma\leq r.$ Then $y^{(1)}=y(\gamma x+1-r)\leq y(rx+1-r)\leq y(r+1-r)=y.$ In addition, $x^{(1)}\geq x$ when $y\leq1-x$ and $x^{(1)}\leq x$ when $y\geq1-x.$ Since  $y^{(1)}\leq y,$ the case $y^{(1)}>2-x^{(1)}$ is impossible when $y\leq1-x.$ We only need to consider the case where $y\geq1-x.$ From $x^{(1)}\leq x$ and $y^{(1)}\leq y,$ we get that $y^{(1)}\leq y\leq2-x\leq2-x^{(1)}.$ Thus,  $y^{(1)}\leq 2-x^{(1)}$ when $0<\gamma\leq r.$

\emph{Case-2}. Let $\gamma> r.$ Now, we determine the conditions on the parameters $r$ and $\gamma$ such that $y^{(1)}\leq 2-x^{(1)}$ is satisfied.
\[
y^{(1)}+x^{(1)}-2\leq0 \ \ \Rightarrow \ \ 2x-x^2-2+y(\gamma x-x+1-r)\leq0,
\]

\[
y(x-\gamma x+r-1)\geq 2x-x^2-2.
\]
It is fact that $2x-x^2-2<0.$ Moreover, if $\gamma\geq1$ then $x-\gamma x+r-1<0$ and we have
 \[
 y\leq \frac{2x-x^2-2}{x-\gamma x+r-1}.
 \]
 If $\gamma<1$ then from $\gamma>r$ we get $\frac{1-r}{1-\gamma}>1,$ so $x<\frac{1-r}{1-\gamma},$ i.e., $x-\gamma x+r-1<0,$ again we have
  \[
 y\leq \frac{2x-x^2-2}{x-\gamma x+r-1}.
 \]
 Now we consider the equation
 \begin{equation}\label{quad}
 \frac{2x-x^2-2}{x-\gamma x+r-1}=2-x
 \end{equation}
  for which has at most one solution (Fig \ref{fig2}). From (\ref{quad}) we obtain
  \[
  \gamma x^2-(2\gamma+r-1)x+2r=0.
  \]

  \begin{figure}[h!]
  \centering
  \includegraphics[width=5cm]{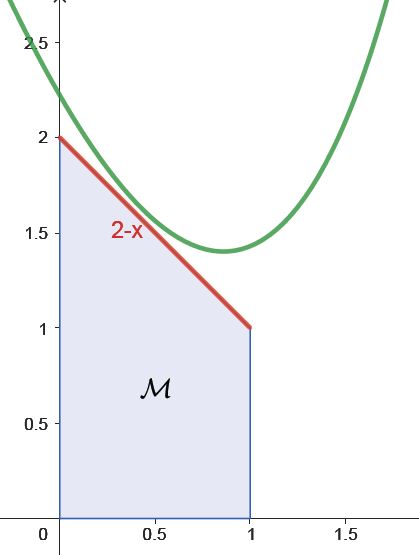}\\
  \caption{The green and red lines do not intersect, ensuring the invariance of $\mathcal{M}$}\label{fig2}
\end{figure}
 The discriminant of this equation must be $\leq0.$ So we have
 \[
 4\gamma^2-4\gamma r-4\gamma+(r-1)^2\leq0.
  \]
  Solving this inequality we get that $(r,\gamma)\in S.$ The lemma has been proven.
\end{proof}

The following theorem provides insight into the dynamics of the operator (\ref{h12}).

\begin{thm}\label{thm3} Let $0<r\leq1,$ and let the initial point $(x^0,y^0)$ be chosen from the set $\mathcal{M}$ with $x^0>0$.

(i) If $0<\gamma\leq r$ then
\[
\lim_{n\to\infty}V^{n}(x^0,y^0)=E_1.
\]

(ii) If $(r,\gamma)\in S$  then
\[
\lim_{n\to\infty}V^{n}(x^0,y^0)=E_2.
\]

\end{thm}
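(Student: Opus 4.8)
The plan is to apply a discrete version of LaSalle's invariance principle on the compact, positively invariant set $\mathcal{M}$ furnished by Lemma \ref{lem2}: if $W$ is continuous on $\mathcal{M}$ and its one-step increment $\Delta W = W\circ V - W$ satisfies $\Delta W\le 0$ throughout $\mathcal{M}$, then every orbit starting in $\mathcal{M}$ converges to the largest $V$-invariant subset of $\{(x,y)\in\mathcal{M}:\Delta W(x,y)=0\}$. The candidate Lyapunov functions are the discrete counterparts of those in Section 2, namely $W_1(x,y)=x-\ln x+\gamma^{-1}y-1$ for part (i) and $W_2=u(\hat x,x)+\gamma^{-1}u(\hat y,y)$, with $u(a,b)=a\ln(a/b)+b-a$, for part (ii). Throughout I would restrict to genuinely interior data (equivalently, assume $x^{(n)}>0$ for all $n$): any point on the top edge $y=2-x$ is sent by $V$ to the $y$-axis and thence to $E_0$, so such orbits do not converge to $E_1$ or $E_2$ and must be excluded from the stated conclusion.

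For part (i) I would argue directly, which is cleaner than invoking $W_1$. On $\mathcal{M}$ one has $x^{(n)}\in[0,1]$ and $\gamma\le r$, so the factor in $y^{(n+1)}=y^{(n)}(\gamma x^{(n)}+1-r)$ lies in $[1-r,1]\subseteq[0,1]$; hence $y^{(n)}$ is non-increasing and converges to some $\ell\ge 0$. If $\gamma<r$ the factor is bounded above by $\gamma+1-r<1$, giving geometric decay and $\ell=0$; if $\gamma=r$, equality of this factor with $1$ would force $x^{(n)}\to 1$, and passing to the limit in $x^{(n+1)}=x^{(n)}(2-x^{(n)}-y^{(n)})$ yields $1=1-\ell$, again $\ell=0$. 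Once $y^{(n)}\to 0$, the $x$-equation becomes an asymptotically autonomous perturbation of the logistic map $f(x)=x(2-x)$, whose restriction to $(0,1]$ was already shown to converge to $1$; a $\liminf/\limsup$ squeeze (using that $x=0$ repels, since $x^{(n+1)}/x^{(n)}\to 2$ near $0$) then gives $x^{(n)}\to 1$, i.e.\ $V^n\to E_1$.

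For part (ii) the same LaSalle template is the natural route, but here lies the main obstacle. In continuous time the derivative $\dot L=-(x-\hat x)^2$ was manifestly non-positive; the discrete increment is not. A direct computation gives
\[
\Delta W_2 = x-x^2-\hat x\,y-\hat x\ln(2-x-y)-\gamma^{-1}\hat y\ln(\gamma x+1-r),
\]
which vanishes at $E_2$ but tends to $+\infty$ as $y\to(2-x)^-$ (because $V$ drives $x^{(1)}\to 0$ there) and, as a check at interior sample points shows, can even be positive arbitrarily close to $E_2$, reflecting the rotational (spiralling) character of the sink $E_2$ on $S$. Thus $W_2$ is not a global—nor even a local—discrete Lyapunov function, so the heart of the proof must be a two-stage argument: first confine every interior orbit to a compact trapping region $\mathcal{M}'\subset\mathcal{M}$ bounded away from both axes and from the top edge, exploiting the tangency (discriminant $\le 0$) condition defining $S$ in Lemma \ref{lem2}; then, on $\mathcal{M}'$, use that $E_2$ is a local sink for $(r,\gamma)\in S$ (Proposition \ref{prop1}(iii), since $S\subset\{r<\gamma<1+r\}$) together with a Lyapunov function tailored to the linearization at $E_2$ to capture the rotation, and finally exclude other limit behaviour by noting that $E_0,E_1$ repel interior orbits and that $E_2$ is the unique interior fixed point.

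I expect the decisive and most delicate step to be the construction of the trapping region $\mathcal{M}'$ and of a genuinely monotone discrete Lyapunov function on it: the continuous analogue $W_2$ fails, and replacing it requires a function compatible both with the spiralling near $E_2$ and with the strong contraction of the $x$-coordinate near the boundary $y=2-x$. Everything else—invariance of $\mathcal{M}$, the classification of the fixed points, and the identification of the residual invariant set with $\{E_2\}$—follows from results already established in the excerpt.
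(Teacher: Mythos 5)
Your part (i) is correct, and it takes a genuinely different route from the paper. The paper proves (i) via LaSalle's principle with the linear function $L(x,y)=1-x+cy$, $c\geq \frac{1}{r-\gamma}$, whose increment $\Delta L=x(x-1)+y(c\gamma x+x-cr)$ is nonpositive on $\mathcal{M}$; you instead use the monotonicity of $y^{(n)}$ (factor $\gamma x+1-r\in[1-r,1]$) followed by an asymptotically autonomous comparison with the logistic map. Your route buys two things the paper's argument does not deliver. First, it covers the boundary case $\gamma=r$: there no admissible $c$ exists, since at points $(1,y)$ with $y>0$ the paper's increment equals $y(c\gamma+1-cr)=y>0$ for every $c$, so the published Lyapunov computation fails exactly at $\gamma=r$ while your ratio argument ($\ell>0$ forces $x^{(n)}\to1$, then $1=1-\ell$) handles it. Second, your exclusion of the top edge $y=2-x$ is a genuine correction of the statement: such points lie in $\mathcal{M}$, have $x^0>0$, yet satisfy $x^{(1)}=0$ and then $y^{(n)}\to0$, so the orbit converges to $E_0$ rather than $E_1$ or $E_2$; the paper's ``assume $x>0$'' glosses over precisely this. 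Your $\liminf/\limsup$ squeeze closes cleanly because $f(x)=x(2-x)$ is increasing on $[0,1]$ with $f(x)>x$ on $(0,1)$, which forces $\liminf x^{(n)}\geq1\geq\limsup x^{(n)}$ once $y^{(n)}\to0$ and the orbit is bounded away from $0$.

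Part (ii), however, contains a genuine gap: it is a program, not a proof. Your diagnosis that the discrete increment of $W_2$ is sign-indefinite is correct and verifiable (e.g.\ for $r=0.1$, $\gamma=0.5\in S$, the point $(0.2,0.9)$ near $E_2=(0.2,0.8)$ gives $\Delta W_2=0.16-0.18-0.2\ln 0.9\approx 0.001>0$), but the two objects your plan hinges on---the trapping region $\mathcal{M}'$ and a ``genuinely monotone'' replacement function---are never constructed, as you acknowledge. Moreover, your proposed endgame is logically insufficient for planar maps: a unique interior fixed point that is a local sink, together with $E_0,E_1$ repelling interior orbits, does not exclude periodic orbits or invariant circles as $\omega$-limit sets (an attracting invariant curve appears the moment $\gamma$ crosses $1+r$, by Theorem~\ref{bifurcation}), so some quantitative use of $(r,\gamma)\in S$ is indispensable at exactly the step you leave open. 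The paper closes this step with a device of a different flavor from anything in your sketch: the piecewise function $L=\ln y$ for $x\leq\hat{x}$, $L=-\ln y$ for $x>\hat{x}$, whose increment along branch-consistent steps is $\pm\ln(\gamma x+1-r)\leq0$, depends only on $x$, and vanishes exactly on the line $x=\hat{x}$, whose largest invariant subset is $\{E_2\}$ (a point of that line stays on it only if $y=\hat{y}$). One may fairly object that the paper's $L$ is discontinuous across $x=\hat{x}$ and that the mixed-branch increments are not controlled there, so even the published argument needs repair---but the essential missing idea in your proposal is precisely such a monotone quantity whose zero-increment set is thin and carries no invariant set other than $E_2$, and without it your proof of (ii) does not close.
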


\begin{proof} Under conditions to parameters we note that the set $\mathcal{M}$ is an invariant. Recall that the quadratic function $x(2-x)$ achieves its maximum value of 1 on the interval [0,2].  This implies that if
$1<x^0\leq2$, then after one iteration, $0<x^{(1)}\leq1$. Therefore, it suffices to consider the first coordinate of the initial point within the interval [0,1].

(i). Let $0<\gamma\leq r.$  Note that, in this case, there is no positive fixed point, the fixed point $E_1=(1,0)$ is an attracting, so there exists a neighbourhood $U(E_1)$ such that the trajectory starting from  it converges to $E_1.$ Moreover, as demonstrated in the proof of Lemma \ref{lem2}, the sequence $y^{(n)}$ is always decreasing and bounded below by zero, which ensures that it has a limit. In addition, the sequence $x^{(n)}$  decreases when it is above the line
$y=1-x$ and increases when it is below that line.  Thus, the trajectory starting from the initial point $(x^0,y^0)\in \mathcal{M}$ falls into the neighbourhood $U(E_1)$ after some finite steps and then it converges to the fixed point $E_1=(1,0).$ To prove the statement, we employ LaSalle's Invariance Principle.

Define the following continuous function for any $c\geq\frac{1}{r-\gamma}$:
\[
L(x,y)=1-x+cy.
\]
Then $L(x,y)\geq0$ and $L(1,0)=0.$ Consider

\[
\triangle L=L(x^{(1)},y^{(1)})-L(x,y)=x-x^{(1)}+c(y^{(1)}-y)=x(x-1)+y(c\gamma x+x-cr)\leq0
\]
if $c\geq\frac{1}{r-\gamma}.$ The set

\[
\mathcal{B}=\{(x,y)\in \mathcal{M}: \triangle L=0\}=\{(0,0), (1,y)\}.
\]
Assume $x > 0$ (the case $x = 0$ is studied above). Then, the largest invariant set in $\mathcal{B}$ is the fixed point $E_1 = (1, 0)$. Since the set $\mathcal{M}$ is compact, it follows from LaSalle's Invariance Principle that any trajectory starting in $\mathcal{M}$ converges to the fixed point $E_1$.

 (ii). Let $(r,\gamma)\in S.$ It can be easily shown that $r<\gamma<r+1,$ which implies the existence of a positive fixed point  $E_2=\left(\frac{r}{\gamma}, \frac{\gamma-r}{\gamma}\right).$ Moreover, this fixed point is an attractive hyperbolic point.

  Define the following piecewise function:
  \[
  L(x,y)=\left\{\begin{array}{lll}
\ln y, \ \ {\rm if} \ \   x\leq\frac{r}{\gamma}\\[2mm]
\ln \frac{1}{y}, \ \ {\rm if} \ \   x>\frac{r}{\gamma}
\end{array}\right.
  \]

  Then
  \[
  \triangle L=L(x^{(1)},y^{(1)})-L(x,y)= \left\{\begin{array}{lll}
\ln (\gamma x+1-r), \ \ {\rm if} \ \   x\leq\frac{r}{\gamma}\\[2mm]
\ln \frac{1}{\gamma x+1-r}, \ \ {\rm if} \ \   x>\frac{r}{\gamma}
\end{array}\right.
  \]

  Thus, $\triangle L\leq0$ for all $x\in[0,1].$
Next we determine the nature of the set
\[
\mathcal{B}=\{(x,y)\in \mathcal{M}: \triangle L=0\}=\{(x,y)\in \mathcal{M}: x=r/\gamma\}.
\]

Then, the largest invariant set in $\mathcal{B}$ is the fixed point $E_2$. Since the set $\mathcal{M}$ is compact, LaSalle's Invariance Principle ensures that any trajectory starting in $\mathcal{M}$ converges to the fixed point $E_2$. This completes the proof.
 \end{proof}
%
%
%
\begin{figure}[h!]
    \centering
    \subfigure[\tiny$r=0.3, \gamma=0.25$]{\includegraphics[width=0.45\textwidth]{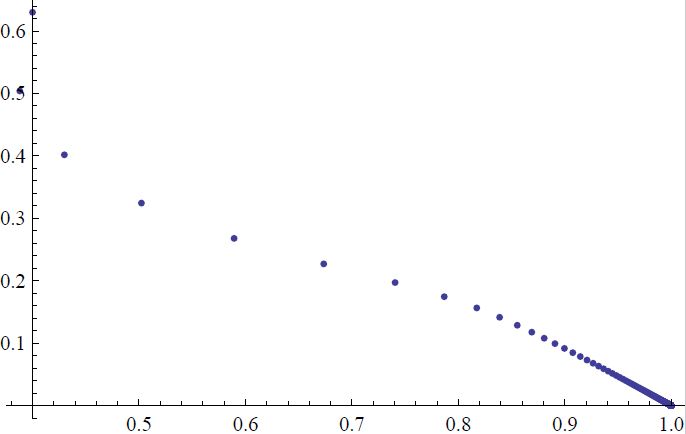}} \hspace{0.3in}
    \subfigure[\tiny $r=0.3, \gamma=0.9$]{\includegraphics[width=0.45\textwidth]{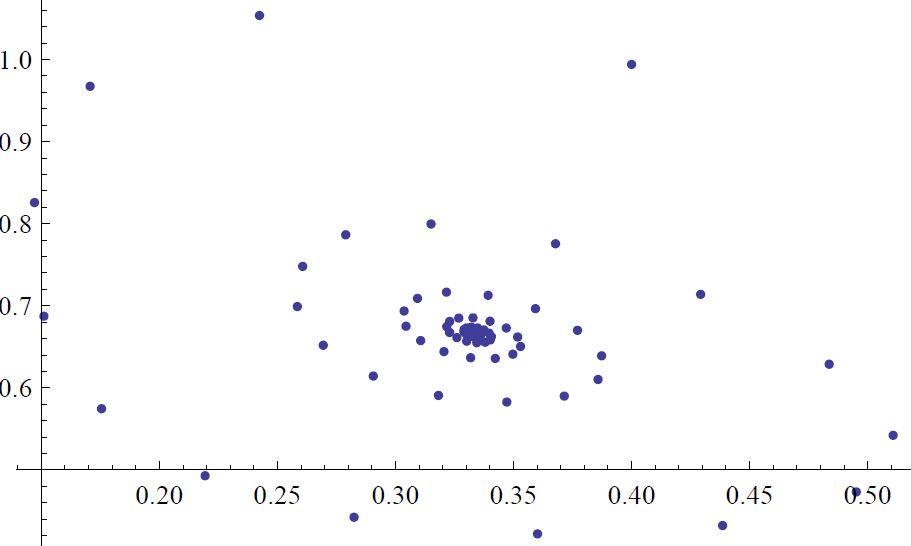}}
    \caption{Two trajectories starting from $x^0=0.8, y^0=0.7.$ }
    \label{dyn}
\end{figure}

\begin{rk} In the cases $r > 1$ or initial point is taken from the set $(x,y)\in \{\mathbb{R}^2_{+}: x > 2, y > 2 - x\}$, the system (\ref{h12}) may produce trajectories with negative coordinates, which are not biologically meaningful.

\end{rk}

\section{Bifurcation analysis}

Proposition \ref{prop1} demonstrates that as $\gamma$ passes through the critical value
$\gamma_0=r+1,$ the dimensions of the stable and unstable manifolds of the fixed point
$E_2$ change, a bifurcation will occur. The following definitions are provided.

Recall that in the dynamical system $(T,X,\phi^t),$ $T$ is a time set, $X$ is a state space and  $\phi^t: X\rightarrow X$ is a family of evolution operators parameterized by $t\in T.$

\begin{defn}(see \cite{Kuz}) A dynamical system $\{T, \mathbb{R}^n, \varphi^t\}$ is called locally topologically equivalent near a fixed point $x_0$ to a dynamical system $\{T, \mathbb{R}^n, \psi^t\}$
near a fixed point  $y_0$ if there exists a homeomorphism $h : \mathbb{R}^n \rightarrow \mathbb{ R}^n$ that
is

(i) defined in a small neighborhood $U\subset \mathbb{R}^n$ of $x_0$;

(ii) satisfies $y_0 = h(x_0)$;

(iii) maps orbits of the first system in $U$ onto orbits of the second system
in $V = f(U)\subset\mathbb{R}^n $, preserving the direction of time.
\end{defn}

Recall that, the phase portrait of a dynamical system is a partitioning of the state space into orbits. In the dynamical system depending on parameters, if parameters vary then the phase portrait also varies. There are two possibilities: either the system remains topologically equivalent to the original one, or its topology changes.

\begin{defn}[see \cite{Kuz}]The appearance of a topologically nonequivalent phase portrait under variation of parameters is called a bifurcation.
\end{defn}

Suppose that given two-dimensional discrete-time system depending on parameters and its Jacobian matrix at the nonhyperbolic fixed point has two complex conjugate eigenvalues $\mu_{1,2}$ with modules one.

\begin{defn}[see \cite{Kuz}]  The bifurcation corresponding to the presence of $\mu_{1,2}$ is called a Neimark-Sacker (or torus) bifurcation.
\end{defn}

Through straightforward calculations, we find that when $\gamma=r+1,$ ($0<r\leq1$) the two complex conjugate eigenvalues of the fixed point  $E_2$ are given by:
\[
\mu_{1,2}=\frac{r+2\mp i\sqrt{3r^2+4r}}{2(r+1)}.
\]
So, the fixed point $E_2$ can pass through a Neimark-Sacker bifurcation when
$0<r\leq1$ and $\gamma$ varies in the small neighborhood of $\gamma_0=1+r.$ To clearly demonstrate the process, we proceed with the following steps.

\textbf{The first step.} Introduce the variable changes $u=x-\hat{x},$ $v=y-\hat{y}$, which shift
the fixed point $E_2=(\hat{x},\hat{y})$ to the origin, and transform system $\ref{h12}$ into
\begin{equation}\label{bif1}
\left\{\begin{aligned}
&u^{(1)}= (u+\hat{x})(1-u-v) -\hat{x}\\
&v^{(1)}= (v+\hat{y})(\gamma u+1) -\hat{y}
\end{aligned}\right.
\end{equation}
where $\hat{x}=\frac{r}{\gamma},$ $\hat{y}=\frac{\gamma-r}{\gamma}.$

\textbf{The second step.} Give a small perturbation $\gamma^*$ to the parameter $\gamma,$ such that $\gamma=\gamma_0+\gamma^*.$ The perturbed system (\ref{bif1}) can then be expressed as:

\begin{equation}\label{bif2}
\left\{\begin{aligned}
&u^{(1)}= (u+\hat{x})(1-u-v) -\hat{x}\\
&v^{(1)}= (v+\hat{y})\left((\gamma_0+\gamma^*)u+1\right) -\hat{y}.
\end{aligned}\right.
\end{equation}

The Jacobian of the system (\ref{bif2}) at the point (0,0) is
\begin{equation}\label{jacob}
J(0,0)=\begin{bmatrix}
1-\hat{x} &~~~ -\hat{x}\\
(\gamma_0+\gamma^*)\hat{y} &~~~ 1
\end{bmatrix}
\end{equation}
and its characteristic equation is
$$\lambda^2-a(\gamma^*)\lambda+b(\gamma^*)=0,$$
where
\[
a(\gamma^*)=Tr(J)=2-\hat{x},
\]
 and
\begin{align*}
 b(\gamma^*)&=\det(J)=1-\hat{x}+\hat{x}\hat{y}(\gamma_0+\gamma^*).
\end{align*}
Note that \(b(\gamma^*) = 1\) when \(\gamma^* = 0\). Therefore, if \(\gamma^*\) varies within a small neighborhood of \(\gamma^* = 0\), we have \(a^2(\gamma^*) - 4b(\gamma^*) < 0\), and the complex roots of the characteristic equation are:
\begin{equation}\label{bif5}
\lambda_{1,2}=\frac{1}{2}[a(\gamma^*)\pm i \sqrt{4b(\gamma^*)-a^2(\gamma^*)}].
\end{equation}
Thus,
\begin{equation}\label{bif6}
|\lambda_{1,2}|=\sqrt{b(\gamma^*)}
\end{equation}
and
\begin{equation}\label{bif7}
\frac{d|\lambda_{1,2}|}{d\gamma^*}\Bigm|_{\gamma^*=0}\!=\! \frac{\hat{x}\hat{y}}{2\sqrt{b(\gamma^*)}}\Bigm|_{\gamma^*=0}\!=\!\frac{\hat{x}\hat{y}}{2}>0.
\end{equation}
Thus, the transversality condition $\left(\frac{d|\lambda_{1,2}|}{d\gamma^*}\Bigm|_{\gamma^*=0}\neq0\right)$ has been proven. Consider the non-degenerate condition, i.e.,  $\lambda_{1,2}^m(0)\neq1$ when $m=1,2,3,4.$ Since $1<a(0)=2-\hat{x}<2$ and $b(0)=1,$ it is easy to deduce that $\lambda_{1,2}^m(0)\neq1$ for all $m=1,2,3,4.$  Hence, all conditions are satisfied for Neimark–Sacker bifurcation to occur.

\textbf{The third step}. Study the normal form of the system (\ref{bif2}) when \(\gamma^* = 0:\)
\begin{equation}\label{bif8}
\left\{\begin{aligned}
&u^{(1)}= (1-\hat{x})u -\hat{x}v-u^2-uv\\
&v^{(1)}= \gamma_0\hat{y}u+v+\gamma_0uv
\end{aligned}\right.
\end{equation}
or
\begin{equation}\label{bif9}
\left\{\begin{aligned}
&u^{(1)}= \frac{u}{r+1}-\frac{rv}{r+1}-u^2-uv\\
&v^{(1)}= u+v+(r+1)uv.
\end{aligned}\right.
\end{equation}
The two eigenvalues of the linear part of the system (\ref{bif8}) are:
$$\lambda_{1,2}=\frac{2-\hat{x}\mp i\sqrt{4\hat{x}-\hat{x}^2}}{2}=\frac{r+2\mp i\sqrt{3r^2+4r}}{2(r+1)},$$
where $\hat{x}=\frac{r}{\gamma_0}=\frac{r}{r+1}.$
The corresponding eigenvectors are
$$\overrightarrow{v}_{1,2}=\begin{bmatrix}-r\\2(r+1)\end{bmatrix}\mp i\begin{bmatrix}\sqrt{3r^2+4r}\\ 0\end{bmatrix}.$$

\textbf{The fourth step}. We find the normal form of the system (\ref{bif2}). Let matrix
\[
M= \begin{bmatrix} \sqrt{3r^2+4r} & -r\\ 0 & 2(r+1)\end{bmatrix}
\]
then

\[
M^{-1}= \begin{bmatrix} \frac{1}{\sqrt{3r^2+4r}} & \frac{r}{2(r+1)\sqrt{3r^2+4r}} \\
0 & \frac{1}{2(r+1)}\end{bmatrix}.
\]

By transformation, we get that
\begin{equation}\label{bif10}
(u,v)^T=M\cdot(X,Y)^T
\end{equation}
and the system (\ref{bif8}) transforms into the following system

\begin{equation}\label{bif9}
\left\{\begin{aligned}
&X^{(1)}= \frac{r+2}{2(r+1)}X-\frac{\sqrt{3r^2+4r}}{2(r+1)}Y+F(X,Y)\\
&Y^{(1)}= \frac{\sqrt{3r^2+4r}}{2(r+1)}X+\frac{r+2}{2(r+1)}Y+G(X,Y),
\end{aligned}\right.
\end{equation}
where
\begin{equation}
\begin{split}
&F(X,Y)=-\sqrt{3r^2+4r}X^2+(r^2+r-2)XY+\frac{2r-r^3}{\sqrt{3r^2+4r}}Y^2,\\
&G(X,Y)=(r+1)\sqrt{3r^2+4r}XY-r(r+1)Y^2.
\end{split}
\end{equation}

In addition, the partial derivatives at $(0,0)$ are
\begin{equation}
\begin{split}
&F_{XX}=-2\sqrt{3r^2+4r}, \ \ F_{XY}=r^2+r-2, \ \ F_{YY}=\frac{4r-2r^3}{\sqrt{3r^2+4r}}, \\
&F_{XXX}=F_{XYY}=F_{XXY}=F_{YYY}=0,\\
&G_{XX}=0, \ \ G_{XY}=(r+1)\sqrt{3r^2+4r}, \ \ G_{YY}=-2r(r+1), \\
&G_{XXX}=G_{XXY}=G_{XYY}=G_{YYY}=0.
\end{split}
\end{equation}

\textbf{The fifth step}. We need to compute the discriminating quantity $\mathcal{L}$ via the following formula (see \cite{Rob}), which determines
the stability of the invariant circle bifurcated from Neimark-Sacker bifurcation of the system (\ref{bif9}):
\begin{equation}\label{lya}
\mathcal{L}=-Re\left[\frac{(1-2\lambda_1)\lambda_2^2}{1-\lambda_1}L_{11}L_{20}\right]-\frac{1}{2}|L_{11}|^2-|L_{02}|^2+Re(\lambda_2 L_{21}),
\end{equation}
where
\begin{equation}
\begin{split}
&L_{20}=\frac{1}{8}[(F_{XX}-F_{YY}+2G_{XY})+i(G_{XX}-G_{YY}-2F_{XY})],\\
&L_{11}=\frac{1}{4}[(F_{XX}+F_{YY})+i(G_{XX}+G_{YY})],\\
&L_{02}=\frac{1}{8}[(F_{XX}-F_{YY}-2G_{XY})+i(G_{XX}-G_{YY}+2F_{XY})],\\
&L_{21}=\frac{1}{16}[(F_{XXX}\!+\!F_{XYY}\!+\!G_{XXY}\!+\!G_{YYY})\!+\!i(G_{XXX}\!+\!G_{XYY}\!-\!F_{XXY}\!-\!F_{YYY})].
\end{split}
\end{equation}

After some calculations, we obtain:
\begin{align}
&L_{20}=\frac{r^3+r^2-0.5r}{\sqrt{3r^2+4r}}+\frac{i}{2},  \notag\\
&L_{11}=-\frac{r^3+3r^2+2r}{2\sqrt{3r^2+4r}}-\frac{r(r+1)}{2}i, \notag\\
&L_{02}=-\frac{r^3+5r^2+5r}{2\sqrt{3r^2+4r}}+\frac{r^2+r-1}{2}i,\\
&L_{21}=0. \notag
\end{align}

Thus, we get
\begin{align}
&|L_{11}|^2=\frac{r(r^4+4r^3+8r^2+4r+1)}{(3r+4)} \notag\\
&|L_{02}|^2=\frac{r^5+5r^4+10r^3+10r^2+5r+1}{(3r+4)} \notag\\
&L_{11}L_{20}=-\frac{r(r+1)(r^3+3r^2-3)}{2(3r+4)}-\frac{r(r+1)(r^3+r^2+1)}{2\sqrt{3r^2+4r}}i \notag\\
&\frac{(1-2\lambda_1)\lambda_2^2}{1-\lambda_1}=-\frac{3r^2+5r+1}{2(r+1)^2}+\frac{(r^2+3r+1)\sqrt{3r^2+4r}}{2r(r+1)^2}i
\end{align}
After performing some calculations, we find that:
\[
\mathcal{L}=\mathcal{L}(r)=-\frac{6r^6+32r^5+64r^4+60r^3+36r^2+19r+4}{2(r+1)(3r+4)}<0 \ \ (0<r\leq1).
\]
 In summary, the discussions above lead us to the following concluding theorem.

\begin{thm}\label{bifurcation} Assume that $0 < r \leq 1$, $\gamma > r$, and $\gamma_0 = 1 + r$. If the parameter $\gamma$ varies within a small neighborhood of $\gamma_0$, then the system (\ref{h12}) undergoes a Neimark-Sacker bifurcation at the fixed point $E_2 = (\hat{x}, \hat{y})$. Furthermore, since $\mathcal{L} < 0$, an attracting invariant closed curve bifurcates from the fixed point for $\gamma > \gamma_0$.
\end{thm}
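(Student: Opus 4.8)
The plan is to recognize this as a direct verification of the hypotheses of the standard Neimark-Sacker bifurcation theorem (see \cite{Kuz, Rob}) for the one-parameter family (\ref{h12}) at $E_2$, with $\gamma$ the bifurcation parameter and $\gamma^*=\gamma-\gamma_0$ the unfolding parameter. That theorem guarantees the birth of a unique invariant closed curve provided four conditions hold: (a) at criticality the Jacobian has a pair of simple complex-conjugate eigenvalues of modulus one; (b) the eigenvalues cross the unit circle transversally as $\gamma^*$ passes through $0$; (c) no strong resonances occur, i.e. $\lambda_{1,2}^m(0)\neq1$ for $m=1,2,3,4$; and (d) the first Lyapunov (discriminating) quantity $\mathcal{L}$ is nonzero, its sign determining whether the bifurcating curve is attracting or repelling. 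The five preparatory steps preceding the statement produce exactly these quantities, so the proof assembles them and reads off the conclusion.

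First I would record condition (a): from (\ref{bif6}) one has $|\lambda_{1,2}|=\sqrt{b(\gamma^*)}$ with $b(0)=1$, while $a(0)=2-\hat{x}=(r+2)/(r+1)\in(1,2)$ forces $a(0)^2-4b(0)<0$, so at $\gamma^*=0$ the eigenvalues are genuinely complex and lie on the unit circle. Condition (b) is (\ref{bif7}): the derivative $\frac{d|\lambda_{1,2}|}{d\gamma^*}$ equals $\frac{\hat{x}\hat{y}}{2}>0$ at $\gamma^*=0$, which is strictly positive because $\gamma>r$ gives $0<\hat{x},\hat{y}<1$; this positivity also fixes the direction of crossing, so $|\lambda|>1$ precisely for $\gamma>\gamma_0$, consistent with Proposition \ref{prop1}(iii). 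For condition (c) I would note that $\cos(\arg\lambda)=a(0)/2\in(\tfrac12,1)$ confines the argument to $(0,\tfrac{\pi}{3})$, which excludes the resonant angles $0,\tfrac{\pi}{2},\tfrac{2\pi}{3},\pi$ and hence rules out $\lambda^m=1$ for $m\le4$.

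The substance of the proof is condition (d), the computation of $\mathcal{L}$. Here I would follow Steps three through five: shift $E_2$ to the origin and freeze $\gamma^*=0$ to obtain the purely quadratic system (\ref{bif8}); diagonalize the linear part through the change of coordinates $(u,v)^T=M(X,Y)^T$ of (\ref{bif10}), producing the real normal form (\ref{bif9}) with nonlinearities $F,G$; extract the quadratic coefficients $F_{XX},F_{XY},F_{YY},G_{XY},G_{YY}$ (all cubic terms vanish, so $L_{21}=0$); form the complex combinations $L_{20},L_{11},L_{02}$; and substitute into the formula (\ref{lya}). Collecting terms reduces $\mathcal{L}$ to the single rational function $\mathcal{L}(r)=-\frac{6r^6+32r^5+64r^4+60r^3+36r^2+19r+4}{2(r+1)(3r+4)}$, whose numerator and denominator are manifestly positive for $0<r\le1$, giving $\mathcal{L}(r)<0$.

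With (a)--(d) in hand the theorem applies: a Neimark-Sacker bifurcation occurs at $E_2$ as $\gamma$ crosses $\gamma_0$, and because $\mathcal{L}<0$ the bifurcation is of the attracting type. Combined with the transversality sign from (b), the unique invariant closed curve appears on the side where $E_2$ has lost stability, namely $\gamma>\gamma_0$, and it is attracting. The main obstacle is entirely computational: the diagonalization and the bookkeeping of the many quadratic coefficients feeding into $L_{20},L_{11},L_{02}$ are where sign or arithmetic slips are most likely, and one must also confirm that the sign convention in (\ref{lya}) agrees with the reference \cite{Rob}, so that $\mathcal{L}<0$ is correctly interpreted as the attracting case.
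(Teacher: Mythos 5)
Your proposal is correct and follows essentially the same route as the paper: verifying the eigenvalue, transversality (\ref{bif7}), and non-resonance conditions, then computing $L_{20}, L_{11}, L_{02}, L_{21}$ through the normal-form transformation (\ref{bif10}) and evaluating $\mathcal{L}(r)$ via (\ref{lya}) to conclude attractivity from $\mathcal{L}<0$. Your only departure is cosmetic but welcome: where the paper asserts non-resonance with ``it is easy to deduce,'' you make it explicit by locating $\arg\lambda_{1,2}(0)$ in $(0,\pi/3)$, which cleanly excludes $\lambda^m=1$ for $m\le4$.
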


\textbf{Example. } Consider the system (\ref{h12}) with parameters $r = 0.5$ and $\gamma = \gamma_0 = 1.5$. The fixed point is $E_2 = \left(\frac{1}{3}, \frac{2}{3}\right)$ with multipliers $\lambda_1 = \frac{5 - i\sqrt{11}}{6}$ and $\lambda_2 = \frac{5 + i\sqrt{11}}{6}$. Moreover, $|\lambda_{1,2}| = 1$, and
$
\frac{d|\lambda_{1,2}|}{d\gamma^*} \bigg|_{\gamma^* = 0} = \frac{1}{9} > 0,
$
while $\mathcal{L} \approx -2.1 < 0$. Hence, according to Theorem \ref{bifurcation}, an attracting invariant closed curve bifurcates from the fixed point for $\gamma^* > 0$. (Fig. \ref{bifur})

\begin{figure}[h!]
    \centering
    \subfigure[\tiny$x^0=0.3, y^0=0.7$]{\includegraphics[width=0.45\textwidth]{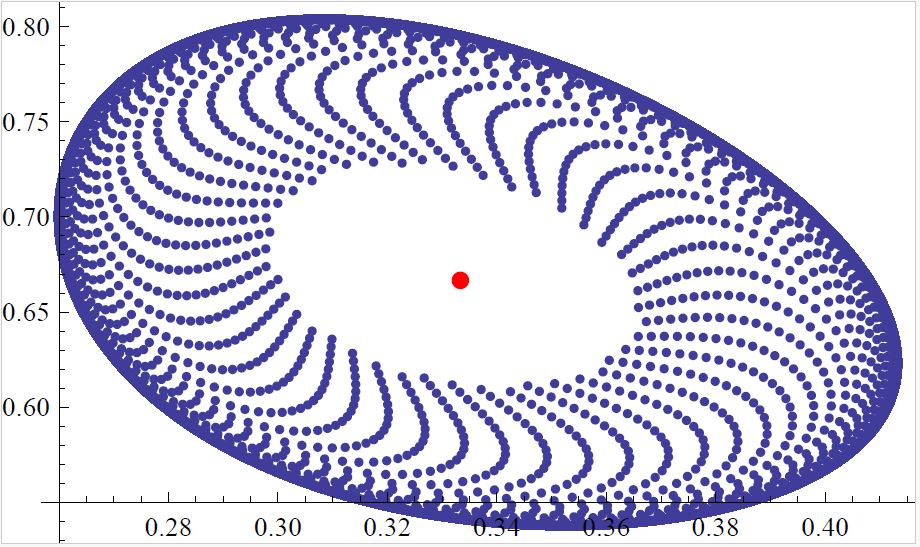}} \hspace{0.3in}
    \subfigure[\tiny $x^0=0.8, y^0=1$]{\includegraphics[width=0.428\textwidth]{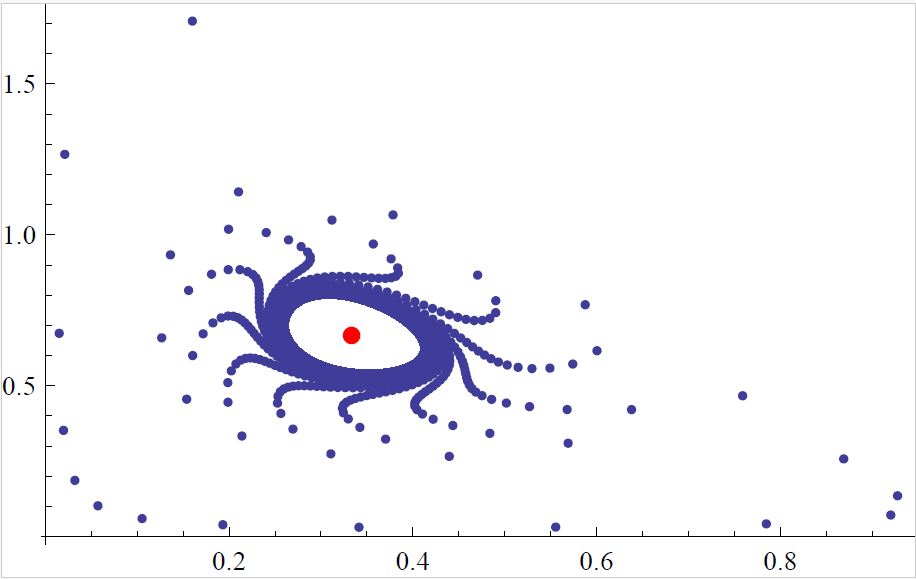}}
    \caption{Phase portraits for the system (\ref{h12}) with $r=0.5, \gamma=1.51$ i.e., $\gamma^*=0.01.$ }
    \label{bifur}
\end{figure}

\section*{Conclusion}
In this paper, we thoroughly investigate both the continuous and discrete-time dynamics of a phytoplankton-zooplankton model with linear functional responses.

For the continuous-time case, it is shown that if \( 0 < \gamma \leq r \), the equilibrium point \( E_1 = (1, 0) \) is globally attracting. If \( \gamma > r \), then the positive equilibrium point \( E_2 \) becomes globally attracting. These results are established by constructing a Lyapunov function (Theorem~\ref{thm1} and Theorem~\ref{thm2}).

To analyze the dynamics of the discrete analogue of the model, defined by the operator (\ref{h12}), we identify a closed invariant set (Lemma~\ref{lem2}) and apply LaSalle's Invariance Principle to prove the global attractivity of the equilibrium points (Theorem~\ref{thm3}).

Moreover, we demonstrate the occurrence of a Neimark–Sacker bifurcation at the positive equilibrium point (Theorem~\ref{bifurcation}), indicating that the discrete model exhibits rich dynamical behavior. Additionally, we show that a closed invariant curve is always attracting by proving that the discriminating quantity \( \mathcal{L}\), which is often difficult to verify, is always negative. The occurrence of a bifurcation at a positive fixed point plays a crucial role in understanding the qualitative behavior of ecological systems such as the phytoplankton-zooplankton interaction model. It marks a critical threshold where the system's dynamics can shift dramatically -- from stable coexistence to oscillatory or unstable regimes. Such bifurcations highlight the sensitivity of the ecosystem to parameter changes and can reveal tipping points beyond which the persistence of species may no longer be guaranteed. Recognizing and analyzing these bifurcations not only enhances our theoretical understanding but also provides valuable insights for managing and predicting real-world ecological outcomes.

In contrast to the continuous-time case, the positive equilibrium point \( E_2 \) is not necessarily globally attracting in the discrete case. It becomes globally attracting only when the parameters belong to a specific parameter set \( S \).


\end{document}